\numberwithin{figure}{section}
\theoremstyle{plain}
\newtheorem{thm}{Theorem}[section]
\theoremstyle{definition}
\newtheorem{defn}{Definition}[section]
\theoremstyle{remark}
\newtheorem*{note}{Note}
\title{Rectifying curves on a smooth surface immersed in the euclidean space}
\author[A. A. Shaikh and P. R. Ghosh]{Absos Ali Shaikh$^*$ and Pinaki Ranjan Ghosh}
\address{\noindent\newline  Department of Mathematics,\newline University of
Burdwan, Golapbag,\newline Burdwan-713104,\newline West Bengal, India}
\email{aask2003@yahoo.co.in, aashaikh@math.buruniv.ac.in}
\email{mailtopinaki94@gmail.com}
\begin{document}

\begin{abstract}
The main objective of the present paper is to investigate a sufficient condition for which a rectifying curve on a smooth surface remains invariant under isometry of surfaces, and also it is shown that under such an isometry the component of the position vector of a rectifying curve on a smooth surface along the normal to the surface is invariant.
\end{abstract}
\noindent\footnotetext{ $^*$ Corresponding author.\\
$\mathbf{2010}$\hspace{5pt}Mathematics\; Subject\; Classification: 53A15, 53A04, 53A05, 51M05.\\ 
{Key words and phrases: Rectifying curve, Frenet-Serret equation, isometry of surfaces, first fundamental form.} }
\maketitle
\section{Introduction}
In 2003, Bang-Yen Chen \cite{BYC03} introduced the notion of the rectifying curve in the Euclidean space $\mathbb{R}^3$ as a curve whose position vector lies in the rectifying plane and such a curve classified by an unit speed curve in an unit sphere $S^2$ and also obtained some of its characterization. For further properties of rectifying curves we refer the reader to see \cite{BYC05} and \cite{BYC18}. By motivating the above studies, the main goal of this paper is to investigate the nature of rectifying curves on a smooth surface $S$ under an isometry to another smooth surface $\bar{S}$. Then we obtain a sufficient condition for which a rectifying curve on $S$ remains invariant under isometry $F:S\rightarrow \bar{S}$. We also note that under isometry of $\mathbb{R}^3$, a rectifying curve on $\mathbb{R}^3$ is not necessarily transformed to a rectifying curve on $\mathbb{R}^3$. It is also shown that the component of the position vector of a rectifying curve on a smooth surface along the normal to the surface is invariant under the rectifying curve preserving isometry of surfaces.
\par
The structure of the paper is as   follows. Section 2 deals with the discussion of some rudimentary facts of Frenet-Serret equations and rectifying curves. Section 3 is devoted to the study of rectifying curves on a smooth surface and deduced the components of position vectors of such a curve along the normal to the surface. The last section is concerned with the main result (see Theorem \ref{thm1}, Theorem \ref{thm2}).
\section{Preliminaries}
In this section, we recall some rudimentary facts of rectifying curves, isometry of surfaces and first fundamental form (for details see, \cite{AP01}, \cite{MPDC76 }) which will be used throughout the paper.
\par
Let $\gamma(s):I\rightarrow \mathbb{R}^3$, where $I=(\alpha,\beta)\subset\mathbb{R}$, be a unit speed parametrized curve having at least fourth order continuous derivatives. Let the tangent vector of the curve $\gamma(s)$ be denoted by $\vec{t}$. We consider $\vec{t'}(s)\neq 0$, so that there is an unit normal vector $\vec{n}$ along $\vec{t'}(s)$ and also a positive function $k(s)$ such that $\vec{t'}(s)=k(s)\vec{n}(s)$, where $\vec{t'}$ denote the derivative with respect to the arc length parameter $s$. The binormal vector field is defined by $\vec{b}=\vec{t}\times \vec{n}$. There is another curvature function $\tau(s)$, called torsion, and is given by the equation $\vec{b'}(s)=\tau(s)\vec{n}(s)$. At each point on $\gamma(s)$, $\{\vec{t},\vec{n},\vec{b}\}$ forms an orthonormal frame. At every point of the curve $\gamma(s)$, the planes generating by $\{\vec{t},\vec{n}\}$,$\{\vec{n},\vec{b}\}$ and $\{\vec{b},\vec{t}\}$ are called osculating plane, normal plane and rectifying plane respectively. The quantity $ \|\vec{b'}(s)\|$ measures the rate of change of the neighboring osculating plane with the osculating plane at $s$. The Frenet-Serret equations are given by
\begin{eqnarray*}
\vec{t'}&=& k\vec{n},\\
\vec{n'}&=&-k\vec{t}+\tau \vec{b},\\
\vec{b'}&=& -\tau \vec{n}.
\end{eqnarray*}
A curve in $\mathbb{R}^3$ is called rectifying \cite{BYC03} if its position vector always lies in the rectifying plane of that curve. The position vector $\gamma(s)$ satisfies the equation
\begin{equation*}
\gamma(s)=\lambda(s)t(s)+\mu(s)b(s),
\end{equation*}
 for some functions $\lambda(s)$ and $\mu(s)$.
\par
Let $\gamma(t)=\phi(u(t),v(t))$, where $t \in (a,b)\subset\mathbb{R}$,  be a curve in a surface patch $\phi$. Then $\{\phi_u,\phi_v\}$ are linearly independent, and hence generates the tangent space $T_p\phi$ at a point $p\in\phi$. Thus we have
\begin{eqnarray*}
\|\dot{\gamma(t)}\|&=&(\phi_u\dot{u}+\phi_v\dot{v})\cdot(\phi_u\dot{u}+\phi_v\dot{v}),\\
&=&(\phi_u\cdot\phi_u)\dot{u}^2+2(\phi_u\cdot\phi_v)\dot{u}\dot{v}+(\phi_v\cdot\phi_v)\dot{v}^2,\\
&=&E\dot{u}^2+2F\dot{u}\dot{v}+G\dot{v}^2,
\end{eqnarray*}
where $\dot{\gamma}(t)$ denote the derivative with respect to the parameter $t$.
\par
A surface $S$ is said to be regular if, for each $p\in S$ there exists a neighborhood $V\subset\mathbb{R}^3$ and a map $\psi: U\rightarrow V\cap S$ of an open set $U\subset \mathbb{R}^2$ onto $V\cap S\subset \mathbb{R}^3$ such that $\psi$ is differentiable, homeomorphism and the differential $d\psi_q$ is one to one for all $q\in U$.
\begin{defn}
The first fundamental form of a regular surface $S$ at a point $p$ is a quadratic form $I_p:T_pS\rightarrow \mathbb{R}$ given by 
\begin{equation*}
I_p(\dot{\gamma}(t))=<\dot{\gamma}(t),\dot{\gamma}(t)>=\|\dot{\gamma}(t)\|.
\end{equation*}
\end{defn}
\begin{defn}
A diffeomorphism $F:S\rightarrow \bar{S}$, where $S$ and $\bar{S}$ are smooth surfaces in $\mathbb{R}^3$, is an isometry if $F$ takes a curve from $S$ to a curve of same length on $\bar{S}$.
\par
Isometry of $\mathbb{R}^3$ is uniquely described as an orthogonal transformation followed by a translation. If we rotate the rectifying curve $\gamma(s)$ by fixing a point $\gamma(s_0)$ then at $\gamma(s_0)$, the Frenet-Serret frame transforms into another frame. Hence at $\gamma(s_0)$ the corresponding rectifying plane transforms into another rectifying plane. But the position vector of the curve $\gamma(s)$ does not change before and after the rotation. Therefore, generally, rectifying curves are not invariant under the isometry of $\mathbb{R}^3$.
\end{defn}

\section{Rectifying curves  on smooth surfaces}
Let $\phi:U\rightarrow S$ be the coordinate chart for a smooth surface $S$ and the unit speed parametrized curve $\gamma(s):(\alpha,\beta)\rightarrow S$, where$(\alpha,\beta)\subset\mathbb{R}$ contained in the image of a surface patch $\phi$ in the atlas of $S$. Then $\gamma(s)$ is given by, 
\begin{eqnarray}\label{1}
\nonumber(\alpha,\beta) \rightarrow U,
\nonumber\text{\quad}s\rightarrowtail (u(s),v(s),\\
\gamma(s)=\phi(u(s),v(s)).
\end{eqnarray}
Differentiating $(\ref{1})$ with respect to $s$, we get
\begin{eqnarray}
\gamma'(s)&=&\phi_uu'+\phi_vv',\\
\text{i.e., }\nonumber \vec{t}(s)&=&\gamma'(s)=\phi_uu'+\phi_vv',\\
\nonumber
\text{hence, } \vec{t'}(s)&=& u''\phi_u+v''\phi_v+u'^2\phi_{uu}+2u'v'\phi_{uv}+v'^2\phi_{vv}.
\end{eqnarray}
If $k(s)$ is the curvature of $\gamma(s)$ and $\vec{N}$ is normal to $S$ then the normal $\vec{n}(s)$ is given by
\begin{eqnarray}
\nonumber
\vec{n}(s)&=&\frac{1}{k(s)}(u''\phi_u+v''\phi_v+u'^2\phi_{uu}+2u'v'\phi_{uv}+v'^2\phi_{vv}).\\
\nonumber
\vec{b}(s)&=& \vec{t}(s)\times \vec{n}(s)= \vec{t}(s)\times \frac{\vec{t'}(s)}{k(s)},\\
\nonumber
&=&\frac{1}{k(s)}\Big[(\phi_uu'+\phi_vv')\times(u''\phi_u+v''\phi_v+u'^2\phi_{uu}+2u'v'\phi_{uv}+v'^2\phi_{vv})\Big],\\
\nonumber
&=& \frac{1}{k(s)}\Big[u'v''\vec{N}+u'^3\phi_u\times \phi_{uu}+2u'^2v'\phi_u\times \phi_{uv}+u'v'^2\phi_u\times \phi_{vv}-u''v'\vec{N}+\\
\nonumber
&& u'^2v'\phi_v\times \phi_{uu}+2u'v'^2\phi_v\times \phi_{uv}+v'^3\phi_v\times \phi_{vv}\Big],\\
\nonumber
&=&\frac{1}{k(s)}\Big[\{u'v''-u''v'\}\vec{N}+u'^3\phi_u\times \phi_{uu}+2u'^2v'\phi_u\times \phi_{uv}+u'v'^2\phi_u\times \phi_{vv}\\
\nonumber
&&+u'^2v'\phi_v\times \phi_{uu}+2u'v'^2\phi_v\times \phi_{uv}+v'^3\phi_v\times \phi_{vv}\Big].
\end{eqnarray}
So, $\gamma(s)$ in $S$ will be rectifying curve if $\gamma(s)=\lambda(s)t(s)+\mu(s)b(s)$, for some functions $\lambda(s)$ and $\mu(s)$.
i.e.,
\begin{eqnarray}\label{2}
\nonumber
\gamma(s)&=&\lambda(s)(\phi_uu'+\phi_vv')+\frac{\mu(s)}{k(s)}\Big[\{u'v''-u''v'\}\vec{N}+u'^3\phi_u\times \phi_{uu}+2u'^2v'\phi_u\times \phi_{uv}+u'v'^2\phi_u\times \phi_{vv}\\
\nonumber
&&+u'^2v'\phi_v\times \phi_{uu}+2u'v'^2\phi_v\times \phi_{uv}+v'^3\phi_v\times \phi_{vv}\Big]
\end{eqnarray}
  for some functions $\lambda(s)$ and $\mu(s)$.
\par
Now we find component of the position vector of the curve $\gamma(s)$ along the normal $\vec{N}$ to the surface $S$ at a point $\gamma(s)$ and obtain
\begin{eqnarray}
\nonumber
\gamma(s)\cdot \vec{N}&=&\lambda(s)(\phi_uu'+\phi_vv')+\frac{\mu(s)}{k(s)}\Big[\{u'v''-u''v'\}\vec{N}+u'^3\phi_u\times \phi_{uu}+2u'^2v'\phi_u\times \phi_{uv}\\
\nonumber
&&+u'v'^2\phi_u\times \phi_{vv}+u'^2v'\phi_v\times \phi_{uu}+2u'v'^2\phi_v\times \phi_{uv}+v'^3\phi_v\times \phi_{vv}\Big]\cdot\vec{N},\\
\nonumber
&=&\frac{\mu(s)}{k(s)}\Big[(u'v''-u''v) (EG-F^2)+u'^3(\phi_u\times \phi_{uu})\cdot\vec{N}+2u'^2v'(\phi_u\times \phi_{uv})\cdot\vec{N}\\
\nonumber
&&+u'v'^2(\phi_u\times \phi_{vv})\cdot\vec{N}+u'^2v'(\phi_v\times \phi_{uu})\cdot\vec{N}+2u'v'^2(\phi_v\times \phi_{uv})\cdot\vec{N}+v'^3(\phi_v\times \phi_{vv})\cdot\vec{N}\Big],\\
&=&\frac{\mu(s)}{k(s)}\Big[(u'v''-u''v)(EG-F^2)+u'^3\{E(\phi_{uu}\cdot\phi_v)-F(\phi_{uu}\cdot\phi_u)\}+2u'^2v'\{E(\phi_{uv}\cdot\phi_v)\\
\nonumber
&&-F(\phi_{uv}\cdot\phi_u)\}+u'v'^2\{E(\phi_{vv}\cdot\phi_v)-F(\phi_{vv}\cdot\phi_u)\}+u'^2v'\{F(\phi_{uu}\cdot\phi_v)-G(\phi_{uu}\cdot\phi_u)\}\\
\nonumber
&&+2u'v'^2\{F(\phi_{uv}\cdot\phi_v)-G(\phi_{uv}\cdot\phi_u)\}+v'^3\{F(\phi_{vv}\cdot\phi_v)-G(\phi_{vv}\cdot\phi_u)\}.
\end{eqnarray}

\section{Main result}

In the following theorem we consider the expression $F_*(\gamma(s))$ as a product of a $3\times 3$ matrix $F_*$ and a $3\times 1$ matrix $\gamma(s)$.
\begin{thm}\label{thm1}
Let $F:S\rightarrow \bar{S}$ be an isometry, where $S$ and $\bar{S}$ are smooth surfaces and $\gamma(s)$ be a rectifying curve on $S$. Then $\bar{\gamma}(s)$ is a rectifying curve on $\bar{S}$ if
\begin{eqnarray}
\nonumber
\bar{\gamma}(s)-F_*(\gamma(s))&=&\frac{\mu(s)}{k(s)}\Big[u'^3\Big(F_*\phi_u\times \frac{\partial F_*}{\partial u}\phi_u\Big)+2u'^2v'\Big(F_*\phi_u\times \frac{\partial F_*}{\partial u}\phi_v\Big)+u'v'^2\Big(F_*\phi_u\times \frac{\partial F_*}{\partial v}\phi_v\Big)\\
&&+u'^2v'\Big(F_*\phi_v\times \frac{\partial F_*}{\partial u}\phi_u\Big)+2u'v'^2\Big(F_*\phi_v\times \frac{\partial F_*}{\partial u}\phi_v\Big)+v'^3\Big(F_*\phi_v\times \frac{\partial F_*}{\partial v}\phi_v\Big)\Big].
\end{eqnarray}
\end{thm}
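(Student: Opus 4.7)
My approach is to begin from the rectifying representation that $\bar\gamma$ would have on $\bar S$, expand every quantity on $\bar S$ by the chain rule applied to $\bar\phi := F\circ\phi$, and then match the result against $F_*(\gamma(s))$ by invoking the rectifying representation of $\gamma$ derived in Section~3. Whatever is left over should be exactly the expression on the right-hand side of the statement.

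First, since $F$ is an isometry the curve $\bar\gamma(s) = F(\gamma(s))$ is again unit speed, the functions $u(s), v(s)$ still parametrize $\bar\gamma$ in the chart $\bar\phi$, and the first-fundamental-form coefficients $E, F, G$ are preserved. Viewing $F_*$ as a $3\times 3$ matrix-valued function of $(u,v)$, I would use the isometry hypothesis to conclude that $F_*$ is orthogonal at every point, so that $F_*(a\times b) = F_*a \times F_*b$ and $F_*\vec N = \bar N$. Differentiating $\bar\phi_u = F_*\phi_u$ and $\bar\phi_v = F_*\phi_v$ by the product rule for matrix-valued fields then yields
$$\bar\phi_{uu} = F_*\phi_{uu} + \tfrac{\partial F_*}{\partial u}\phi_u, \quad \bar\phi_{uv} = F_*\phi_{uv} + \tfrac{\partial F_*}{\partial u}\phi_v, \quad \bar\phi_{vv} = F_*\phi_{vv} + \tfrac{\partial F_*}{\partial v}\phi_v,$$
and in particular $\bar t(s) = F_*\vec t(s)$.

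The decisive step is to write the rectifying ansatz for $\bar\gamma$ by repeating the derivation of Section~3 with barred quantities, then to make the specific choices $\bar\lambda(s) = \lambda(s)$ and $\bar\mu(s)/\bar k(s) = \mu(s)/k(s)$. Substituting the chain-rule expressions above, each of the six cross products $\bar\phi_i\times\bar\phi_{jk}$ splits as $F_*(\phi_i\times\phi_{jk}) + F_*\phi_i\times(\partial F_*/\partial l)\phi_m$. Collecting the $F_*$-equivariant pieces together with $(u'v''-u''v')\bar N = F_*((u'v''-u''v')\vec N)$ and $\lambda(\bar\phi_u u' + \bar\phi_v v') = F_*(\lambda(\phi_u u' + \phi_v v'))$, one recognizes $F_*$ applied to the full rectifying expression for $\gamma$ on $S$, i.e.\ precisely $F_*(\gamma(s))$. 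The six residual terms (each carrying the factor $\mu/k$) reproduce the right-hand side of the statement; thus the assumed identity becomes equivalent to $\bar\gamma(s) = \lambda(s)\bar t(s) + (\mu(s)\bar k(s)/k(s))\bar b(s)$, which is the rectifying condition for $\bar\gamma$ on $\bar S$.

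\textbf{Main difficulty.} The principal obstacle is the bookkeeping of the six cross-product expansions, together with the tacit identity $(\partial F_*/\partial v)\phi_u = (\partial F_*/\partial u)\phi_v$ needed in the mixed-derivative terms and arising from $\bar\phi_{uv} = \bar\phi_{vu}$. A more conceptual concern is justifying that $F_*$ genuinely acts as an orthogonal $3\times 3$ matrix, so that $F_*a\times F_*b = F_*(a\times b)$ and $F_*\vec N = \bar N$: this rests on the isometry preserving tangential inner products (since $E, F, G$ are unchanged) and on a consistent choice of orientations of the unit normals, which the authors appear to assume implicitly when treating $F_*$ as a $3\times 3$ matrix.
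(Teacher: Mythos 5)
Your proposal is correct and takes essentially the same route as the paper's own proof: both rest on the chain-rule expansions $\bar\phi_{jk}=F_*\phi_{jk}+(\partial F_*/\partial l)\phi_m$ and on splitting each cross product $\bar\phi_i\times\bar\phi_{jk}$ into $F_*(\phi_i\times\phi_{jk})$ plus a residual term, so that the hypothesized identity is exactly what converts $F_*(\gamma(s))$ into the rectifying form $\bar\lambda(s)\vec{\bar t}(s)+\bar\mu(s)\vec{\bar b}(s)$ for $\bar\gamma(s)$. The points you flag as tacit (that $F_*$ acts as an orthogonal, orientation-preserving matrix so that $F_*(a\times b)=F_*a\times F_*b$ and $F_*\vec N=\vec{\bar N}$, and the symmetry $(\partial F_*/\partial v)\phi_u=(\partial F_*/\partial u)\phi_v$) are indeed used implicitly in the paper as well.
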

\begin{proof}
Let $\phi$ and $\bar{\phi}$ be the coordinate charts for $S$ and $\bar{S}$ respectively, where
\begin{equation*}
\bar{\phi}=F\circ\phi.
\end{equation*} 
\par
The tangent plane at a point $p$ on $S$ is generated by two vectors $\phi_u$ and $\phi_v$. Since $F$ is an isometry between $S$ and $\bar{S}$, the differential map $F_*$ of $F$ is a $3\times 3$ orthogonal matrix. Therefore $F_*$ takes linearly independent vectors $\phi_u$ and $\phi_v$ of $T_pS$ to $\bar{\phi}_u$ and $\bar{\phi}_v$ of $T_{F(p)}S$. Also $\vec{N}$ and $\vec{\bar{N}}$ are normals to $S$ and $\bar{S}$ respectively.\\
\begin{eqnarray}
&&\bar{\phi}_u(u,v)=F_*\phi_u=F_*(\phi(u,v))\phi_u,\\
&&\bar{\phi}_v(u,v)=F_*\phi_v=F_*(\phi(u,v))\phi_v.
\end{eqnarray}
Again differentiating $(5)$ and $(6)$ partially with respect to both $u$ and $v$ respectively, we get
\begin{eqnarray}
\nonumber \bar{\phi}_{uu}&= \frac{\partial F_*}{\partial u}\phi_u+F_*\phi_{uu},\\
\bar{\phi}_{vv}&= \frac{\partial F_*}{\partial v}\phi_v+F_*\phi_{vv},\\
\nonumber \bar{\phi}_{uv}&= \frac{\partial F_*}{\partial u}\phi_v+F_*\phi_{uv},\\
\nonumber &= \frac{\partial F_*}{\partial v}\phi_u +F_*\phi_{uv}.\\\nonumber
 \end{eqnarray}

Now
\begin{equation}\label{**}
F_*\phi_u\times \frac{\partial F_*}{\partial u}\phi_u=F_*\phi_u\times\Big(\frac{\partial F_*}{\partial u}\phi_u+F*\phi_{uu}\Big)-F_*(\phi_u\times \phi_{uu})=\bar{\phi}_u\times \bar{\phi}_{uu}-F_*(\phi_u\times \phi_{uu}).
\end{equation}
Similarly 
\begin{eqnarray}\label{***}
\nonumber F_*\phi_u\times \frac{\partial F_*}{\partial u}\phi_v&=\bar{\phi}_u\times \bar{\phi}_{uv}-F_*(\phi_u\times \phi_{uv}),\\
\nonumber F_*\phi_u\times \frac{\partial F_*}{\partial v}\phi_v&=\bar{\phi}_u\times \bar{\phi}_{vv}-F_*(\phi_u\times \phi_{vv}),\\
F_*\phi_v\times \frac{\partial F_*}{\partial u}\phi_u&=\bar{\phi}_v\times \bar{\phi}_{uu}-F_*(\phi_v\times \phi_{uu}),\\
\nonumber F_*\phi_v\times \frac{\partial F_*}{\partial u}\phi_v&=\bar{\phi}_v\times \bar{\phi}_{uv}-F_*(\phi_v\times \phi_{uv}),\\
\nonumber F_*\phi_v\times \frac{\partial F_*}{\partial v}\phi_v&=\bar{\phi}_v\times \bar{\phi}_{vv}-F_*(\phi_v\times \phi_{vv}).
 \end{eqnarray}

In view of $(4)$, (\ref{**}) and $(\ref{***})$, we get
\begin{eqnarray}
\nonumber
\bar{\gamma}(s)&=&\lambda(s)(u'F_*\phi_u+v'F_*\phi_v)+\frac{\mu(s)}{k(s)}\Big[\{u'v''-u''v'\}F_*\vec{N}+u'^3F_*(\phi_u\times \phi_{uu})\\
\nonumber
&&+2u'^2v'F_*(\phi_u\times \phi_{uv})+u'v'^2F_*(\phi_u\times \phi_{vv})+u'^2v'F_*(\phi_v\times \phi_{uu})+2u'v'^2F_*(\phi_v\times \phi_{uv})\\
\nonumber
&&+v'^3F_*(\phi_v\times \phi_{vv})+u'^3\Big(F_*\phi_u\times \frac{\partial F_*}{\partial u}\phi_u\Big)+2u'^2v'\Big(F_*\phi_u\times \frac{\partial F_*}{\partial u}\phi_v\Big)\\
\nonumber
&&+u'v'^2\Big(F_*\phi_u\times \frac{\partial F_*}{\partial v}\phi_v\Big)+u'^2v'\Big(F_*\phi_v\times \frac{\partial F_*}{\partial u}\phi_u\Big)+2u'v'^2\Big(F_*\phi_v\times \frac{\partial F_*}{\partial u}\phi_v\Big)\\
\nonumber
&&+v'^3\Big(F_*\phi_v\times \frac{\partial F_*}{\partial v}\phi_v\Big)\Big].
\end{eqnarray}
Which can be written as
\begin{eqnarray*}
\bar{\gamma}(s)=\lambda(s)\Big(u'\bar{\phi}_u+v'\bar{\phi}_v\Big)+\frac{\mu(s)}{k(s)}\Big[\{u'v''-u''v'\}\vec{\bar{N}}+u'^3\bar{\phi}_u\times \bar{\phi}_{uu}+2u'^2v'\bar{\phi}_u\times \bar{\phi}_{uv}\\
+u'v'^2\bar{\phi_u}\times \bar{\phi}_{vv}+u'^2\dot{v}\bar{\phi}_v\times \bar{\phi}_{uu}+2u'v'^2\bar{\phi}_v\times \bar{\phi}_{uv}+v'^3\bar{\phi}_v\times \bar{\phi}_{vv}\Big],
\end{eqnarray*}
and hence
\begin{equation*} \bar{\gamma}(s)=\bar{\lambda}(s)\vec{\bar{t}}(s)+\frac{\bar{\mu}(s)}{\bar{k}(s)}\vec{\bar{b}}(s),\\
\end{equation*}
for some functions $\bar{\lambda}(s)$ and $\bar{\mu}(s)$. Therefore $\bar{\gamma}(s)$ is a rectifying curve on $\bar{S}$.
\end{proof}
\begin{note}
In the above theorem we see that the functions $\lambda(s)$ and $\bar{\lambda}(s)$ for the rectifying curves $\gamma(s)$ and $\bar{\gamma(s)}$ on $S$ and $\bar{S}$ respectively does not change while taking an isometry on $S$ to $\bar{S}$. Also $\frac{\bar{\mu}(s)}{\bar{k}(s)}=\frac{\mu(s)}{k(s)}$, i.e., $\mu(s)$ and $\bar{\mu}(s)$ for the rectifying curves $\gamma(s)$ and $\bar{\gamma(s)}$ respectively are related by the curvature functions $k(s)$ and $\bar{k}(s)$.
\end{note}
\begin{thm}\label{thm2}
Let $F$ be an isometry of two smooth surfaces $S$ and $\bar{S}$. For the rectifying curves $\gamma(s)$ and $\bar{\gamma}(s)$ on $S$ and $\bar{S}$ respectively the component of the position vector of the rectifying curve along normal to the surface is invariant under the isometry $F$, i.e., $\gamma(s)\cdot\vec{N}=\bar{\gamma}(s)\cdot\vec{\bar{N}}$.
\end{thm}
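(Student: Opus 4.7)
The plan is to compare the explicit expression for $\gamma(s)\cdot\vec{N}$ derived at the end of Section 3 with the analogous expression obtained by applying the same computation to $\bar{\gamma}(s)\cdot\vec{\bar N}$ on $\bar S$, and to show term by term that every ingredient on the right-hand side is preserved by the isometry $F$.

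First I would note that since $\bar{\phi}=F\circ\phi$, the local coordinates $(u(s),v(s))$ are shared by $\gamma$ and $\bar\gamma$, so the derivatives $u',v',u'',v''$ are identical on both sides. Next, because $F$ is an isometry, the first fundamental form is preserved: $\bar E=E$, $\bar F=F$, $\bar G=G$, and in particular $\bar E\bar G-\bar F^{2}=EG-F^{2}$. Furthermore, the Note following Theorem \ref{thm1} gives $\bar\mu(s)/\bar k(s)=\mu(s)/k(s)$, so the common prefactor in the two expressions already agrees.

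The crucial step is to handle the remaining ingredients, namely the six inner products $\phi_{uu}\cdot\phi_u$, $\phi_{uu}\cdot\phi_v$, $\phi_{uv}\cdot\phi_u$, $\phi_{uv}\cdot\phi_v$, $\phi_{vv}\cdot\phi_u$, $\phi_{vv}\cdot\phi_v$. These are all intrinsic: differentiating $E=\phi_u\cdot\phi_u$, $F=\phi_u\cdot\phi_v$, $G=\phi_v\cdot\phi_v$ yields
\begin{align*}
\phi_u\cdot\phi_{uu}&=\tfrac12 E_u, & \phi_v\cdot\phi_{uu}&=F_u-\tfrac12 E_v,\\
\phi_u\cdot\phi_{uv}&=\tfrac12 E_v, & \phi_v\cdot\phi_{uv}&=\tfrac12 G_u,\\
\phi_u\cdot\phi_{vv}&=F_v-\tfrac12 G_u, & \phi_v\cdot\phi_{vv}&=\tfrac12 G_v,
\end{align*}
so each of these inner products depends only on $E,F,G$ and their first partial derivatives. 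Since $F$ preserves these coefficients together with their derivatives with respect to $(u,v)$, the same identities yield identical values for the barred analogues on $\bar S$.

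Substituting into the two formulas, every term of $\bar\gamma(s)\cdot\vec{\bar N}$ would then match the corresponding term of $\gamma(s)\cdot\vec N$, and the equality $\gamma(s)\cdot\vec N=\bar\gamma(s)\cdot\vec{\bar N}$ follows. The main subtlety I foresee is exactly the verification that the mixed inner products $\phi_{ij}\cdot\phi_k$ are intrinsic; once that is granted, the remainder is a mechanical comparison using the isometric invariance of $E,F,G$ and the relation $\bar\mu/\bar k=\mu/k$ recorded after Theorem \ref{thm1}.
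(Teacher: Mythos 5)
Your proposal is correct and follows essentially the same route as the paper: both arguments reduce the claim to showing that the six inner products $\phi_{uu}\cdot\phi_u,\ \phi_{uu}\cdot\phi_v,\ \phi_{uv}\cdot\phi_u,\ \phi_{uv}\cdot\phi_v,\ \phi_{vv}\cdot\phi_u,\ \phi_{vv}\cdot\phi_v$ are preserved, the paper by differentiating the identities $E=\bar{E}$, $F=\bar{F}$, $G=\bar{G}$ directly and you by writing each product explicitly in terms of $E_u,E_v,F_u,F_v,G_u,G_v$ --- the same computation in different packaging. Your version has the small expository advantage of making the intrinsic nature of these quantities explicit, but mathematically the two proofs coincide, including the reliance on $\bar{\mu}/\bar{k}=\mu/k$ and the invariance of $EG-F^2$.
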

\begin{proof}
Since $F:S\rightarrow \bar{S}$ is an isometry and $\gamma(s)$, $\bar{\gamma}(s)$ are rectifying curves on $S$ and $\bar{S}$ respectively, the relations $(5)$, $(6)$ and $(*)$ hold.
Since $S$ and $\bar{S}$ are isometric, we have
\begin{equation}
E=\bar{E},\quad F=\bar{F},\quad G=\bar{G},
\end{equation}
and hence
\begin{equation*}
E=\bar{E}=\bar{\phi_u}\cdot\bar{\phi_u}=(F_*\phi_u)\cdot(F_*\phi_u),
\end{equation*}
\begin{equation}\label{7}
\text{i.e., }(F_*\phi_u)\cdot(F_*\phi_u)=\phi_u\cdot\phi_u.
\end{equation} 
Differentiating $(\ref{7})$ partially with respect to $u$ we get
\begin{equation*}
2\Big(\frac{\partial F_*}{\partial u}\phi_u+F_*\phi_{uu}\Big)\cdot(F_*\phi_u)=2\phi_{uu}\cdot\phi_u,
\end{equation*}
\begin{equation}\label{8}
\text{i.e., }\bar{\phi_{uu}}\cdot\bar{\phi_u}=\phi_{uu}\cdot\phi_u.
\end{equation} 
Again differentiating $(\ref{7})$ partially with respect to $v$ we get
\begin{equation*}
2\Big(\frac{\partial F_*}{\partial v}\phi_u+F_*\phi_{uv}\Big)\cdot(F_*\phi_u)=2\phi_{uv}\cdot\phi_u,
\end{equation*}
\begin{equation}\label{9}
\text{i.e., }\bar{\phi_{uv}}\cdot\bar{\phi_u}=\phi_{uv}\cdot\phi_u.
\end{equation}
Again
\begin{equation*}
G=\bar{G}=\bar{\phi_v}\cdot\bar{\phi_v}=(F_*\phi_v)\cdot(F_*\phi_v),
\end{equation*}
\begin{equation}\label{10}
\text{i.e., }(F_*\phi_v)\cdot(F_*\phi_v)=\phi_v\cdot\phi_v.
\end{equation}
Similarly differentiating $(\ref{10})$ partially with respect to $u$ and $v$ we get
\begin{equation}\label{11}
\bar{\phi_{uv}}\cdot\bar{\phi_v}=\phi_{uv}\cdot\phi_v,
\end{equation} 
and
\begin{equation}\label{12}
\bar{\phi_{vv}}\cdot\bar{\phi_v}=\phi_{vv}\cdot\phi_v.
\end{equation}
Again also
\begin{equation*}
\nonumber F=\bar{F}=\bar{\phi_u}\cdot\bar{\phi_v}=(F_*\phi_u)\cdot(F_*\phi_v),
\end{equation*}
\begin{equation}\label{13}
\text{i.e., }(F_*\phi_u)\cdot(F_*\phi_v)=\phi_u\cdot\phi_v.
\end{equation} 
Differentiating $(\ref{13})$ partially with respect to $u$ we get
\begin{equation*}
\Big(\frac{\partial F_*}{\partial u}\phi_u+F_*\phi_{uu}\Big)\cdot(F_*\phi_v)+(F_*\phi_u)\cdot\Big(\frac{\partial F_*}{\partial u}\phi_v+F_*\phi_{uv}\Big)=\phi_{uu}\cdot\phi_u+\phi_u\cdot\phi_{uv},
\end{equation*}
\begin{equation}\label{14}
\text{i.e., }\bar{\phi_{uu}}\cdot\bar{\phi_v}+\bar{\phi_u}\cdot\bar{\phi_{uv}}=\phi_{uu}\cdot\phi_v+\phi_u\cdot\phi_{uv}.
\end{equation} 
Using equation (\ref{9}) we can write equation $(\ref{14})$ as
\begin{equation}\label{15}
\bar{\phi_{uu}}\cdot\bar{\phi_v}=\phi_{uu}\cdot\phi_v. 
\end{equation}
Differentiating $(15)$ partially with respect to $v$ we get
\begin{equation*}
(\frac{\partial F_*}{\partial v}\phi_u+F_*\phi_{uv})\cdot(F_*\phi_v)+(F_*\phi_u)\cdot(\frac{\partial F_*}{\partial v}\phi_v+F_*\phi_{vv})=\phi_{uv}\cdot\phi_v+\phi_u\cdot\phi_{vv},
\end{equation*}
\begin{equation}\label{16}
\text{i.e., }\bar{\phi_{uv}}\cdot\bar{\phi_v}+\bar{\phi_u}\cdot\bar{\phi_{vv}}=\phi_{uv}\cdot\phi_v+\phi_u\cdot\phi_{vv}.
\end{equation} 
Using equation (\ref{11}) we can write equation $(\ref{16})$ as
\begin{equation}\label{17}
\bar{\phi_{vv}}\cdot\bar{\phi_u}=\phi_{vv}\cdot\phi_u. 
\end{equation}
Equation $(3)$ for the rectifying curve $\bar{\gamma}(s)$ can be written as
\begin{eqnarray}
\nonumber
\bar{\gamma}(s)\cdot \vec{\bar{N}}&=&\frac{\bar{\mu}(s)}{\bar{k}(s)}\Big[(u'v''-u''v)(\bar{E}\bar{G}-\bar{F}^2)+u'^3\{\bar{E}(\bar{\phi}_{uu}\cdot\bar{\phi}_v)-\bar{F}(\bar{\phi}_{uu}\cdot\bar{\phi}_u)\}+2u'^2v'\{\bar{E}(\bar{\phi}_{uv}\cdot\bar{\phi}_v)\\
\nonumber
&&-\bar{F}(\bar{\phi}_{uv}\cdot\bar{\phi}_u)\}+u'v'^2\{\bar{E}(\bar{\phi}_{vv}\cdot\bar{\phi}_v)-\bar{F}(\bar{\phi}_{vv}\cdot\bar{\phi}_u)\}+u'^2v'\{\bar{F}(\bar{\phi}_{uu}\cdot\bar{\phi}_v)-\bar{G} (\bar{\phi}_{uu}\cdot\bar{\phi}_u)\}\\
\nonumber
&&+2u'v'^2\{\bar{F}(\bar{\phi}_{uv}\cdot\bar{\phi}_v)-\bar{G}(\bar{\phi}_{uv}\cdot\bar{\phi}_u)\}+v'^3\{\bar{F}(\bar{\phi}_{vv}\cdot\bar{\phi}_v)-\bar{G}(\bar{\phi}_{vv}\cdot\bar{\phi}_u)\}.
\end{eqnarray}
By virtue of $(10)$ (\ref{8}), (\ref{9}), (\ref{11}), (\ref{12}), (\ref{15}) and (\ref{17}), the last relation yields
\begin{equation*}
\bar{\gamma}(s)\cdot\vec{\bar{N}}=\gamma(s)\cdot\vec{N}.
\end{equation*}
Therefore the component of a rectifying curve $\gamma(s)$ along normal to the surface $S$ is invariant under the rectifying curve preserving isomerty of surfaces.
\end{proof}
\section{acknowledgment}
We are immensely grateful to Professor Dr. Bang-Yen Chen, Michigan State University, for his valuable comments. The second author greatly acknowledges to The University Grants Commission, Government of India for the award of Junior Research Fellow.

\end{document}